\documentclass[11pt,a4paper,oneside]{article}
\setlength{\textwidth}{160mm}
\setlength{\textheight}{235mm}
\setlength{\oddsidemargin}{0in}
\setlength{\topskip}{1cm}
\setlength{\topmargin}{-0.3in}
\raggedbottom
\setlength{\abovedisplayskip}{3mm}
\setlength{\belowdisplayskip}{3mm}
\setlength{\abovedisplayshortskip}{0mm}
\setlength{\belowdisplayshortskip}{2mm}
\setlength{\normalbaselineskip}{12pt}

\normalbaselines 
\usepackage[english]{babel}
\usepackage{graphicx}
\usepackage{amsmath}
\usepackage{amssymb}
\usepackage{mathtools}
\usepackage{mathcomp}
\usepackage{textcomp}
\usepackage{amsthm}
\usepackage{enumerate}
\usepackage{subcaption}
\usepackage[auth-lg]{authblk}
\usepackage[T1]{fontenc}
\newtheorem{theorem}{Theorem}\theoremstyle{plain}
\newtheorem{corollary}{Corollary}\theoremstyle{plain}
\newtheorem{proposition}{Proposition}\theoremstyle{plain}
\newtheorem{lemma}{Lemma}\theoremstyle{plain}
\newtheorem{claim}{Claim}\theoremstyle{plain}
\theoremstyle{plain}
\theoremstyle{plain}
\theoremstyle{plain}

\newcommand{\ignore}[1]{}
\newcommand{\QAP}{\text{\rm QAP}}
\newcommand{\Tr}{\text{\rm Tr}}
\newcommand{\MP}{{\mathcal P}}
\newcommand{\CUT}{\text{\rm CUT}}

\newcommand{\MS}{{\mathcal S}}
\newcommand{\oR}{{\mathbb R}}
\newcommand{\sfT}{{\sf T}}

\begin{document}

\title{\bf The quadratic assignment problem is easy for Robinsonian matrices {with Toeplitz structure}}
\author[1,2]{Monique Laurent}
\author[1]{Matteo Seminaroti}
\date{}

\affil[1]{\small Centrum Wiskunde \& Informatica (CWI), Science Park 123, 1098 XG Amsterdam, The Netherlands}
\affil[2]{\small Tilburg University, P.O. Box 90153, 5000 LE Tilburg, The Netherlands}
\footnotetext{Correspondence to : \texttt{M.Seminaroti@cwi.nl} (M.~Seminaroti), \texttt{M.Laurent@cwi.nl} (M.~Laurent), CWI, Postbus 94079, 1090 GB, Amsterdam. Tel.:+31 (0)20 592 4386.}

\maketitle

\begin{abstract}
We present a new polynomially solvable case of the Quadratic Assignment Problem in Koopmans-Beckman form $\QAP(A,B)$, by showing that the identity permutation is optimal when $A$ and $B$ are respectively a Robinson similarity and dissimilarity matrix and one of $A$ or $B$ is a Toeplitz matrix. 
A Robinson (dis)similarity matrix is a symmetric matrix whose entries (increase) decrease monotonically along rows and columns when moving away from the diagonal, and such matrices arise in the classical seriation problem.\\

\noindent
\textbf{Keywords:}
\textit{Quadratic Assignment Problem}; \textit{Robinson (dis)similarity}; \textit{seriation}; \textit{well solvable special case}
\end{abstract} 

%% \linenumbers

%% main text
\section{Introduction}\label{Introduction}

In this paper we consider two problems over  permutations. Our main problem of interest  is the  {\em Quadratic Assignment Problem} (QAP),    a well studied hard combinatorial optimization  problem, introduced by Koopmans and Beckman \cite{Koopmans57} in 1957 as a mathematical model for 
the location of  indivisible economic activities. In the problem $\QAP(A,B)$, we are given  $n$ facilities, $n$ locations,   a {\em flow matrix} $A$ whose entry $A_{ij}$ represents the flow of activity between two facilities $i$ and $j$,  and a {\em distance matrix} $B$ whose entry $B_{ij}$ represents the distance between the locations $i$ and $j$. Then the objective is to find an assignment of the facilities to the locations, i.e., a permutation $\pi$ of the set $[n]=\{1,\ldots,n\}$, minimizing the total cost of the assignment. That is, solve the following optimization problem over all permutations $\pi$ of $[n]$:
\begin{equation}\label{QAP_1}
{\text{QAP(A,B)}} \quad\quad \quad \quad\min_{\pi } {\sum_{i,j=1}^n{A_{ij}}B_{\pi(i)\pi(j)}},
\end{equation}
where $A_{ij}B_{\pi(i)\pi(j)}$ is the cost inferred by  assigning facility $i$ to location $\pi(i)$ and facility $j$ to location $\pi(j)$. 
QAP has been extensively studied in the past decades, in particular  due to its many real world  applications.
We refer e.g. to  \cite{Burkard98,Burkard09} and references therein for an  exhaustive survey.
QAP is an  NP-hard problem and it cannot even be approximated within a constant factor \cite{Sahni76}. 

However, there exist many special cases which are solvable in polynomial time by exploiting the structure of the data. 
Specifically, we are interested in those ``easy cases" where an optimal solution is known in explicit form and is represented by a
 \textit{fixed} permutation.  These cases have a practical importance in designing heuristics, approximation and enumeration algorithms and they occur when the matrices $A$ and $B$ have an specific ordered structure, like being  Monge, Toeplitz or monotone matrices (see \cite{Woeginger98,Cela98,Demidenko06,Deineko98}, \cite[\S 8.4]{Burkard09} for a survey and the recent works \cite{Cela14,Fogel13}). 
For instance, if $A$ is monotonically nondecreasing (i.e., if both its rows and columns are nondecreasing) and $B$ is monotonically nonincreasing (i.e., if both its rows and columns are nonincreasing), then it is known that the identity permutation is an optimal solution to $\QAP(A,B)$ \cite[Proposition 8.23]{Burkard09}.
Another instance of $\QAP(A,B)$ for which the identity permutation is optimal arises when  $A$  is a Kalmanson matrix, i.e., $A$ is symmetric and satisfies:
$$\max\{ A_{ij}+A_{kl},A_{il}+A_{jk}\}\le A_{ik}+A_{jl}\ \text{ for all } 1\le i<j<k<l\le n,$$ and
$B$ is a symmetric circulant matrix (i.e., $B_{ij}$ depends only on $|i-j|$ modulo $n$) with decreasing generating function (i.e., with $B_{12}\le B_{13}\le \ldots \le B_{1,\lfloor n/2\rfloor +1}$)
 \cite[Theorem 2.3]{Deineko98}. This extends an earlier result of Kalmanson \cite{Kalmanson75}  when $B$ is the adjacency matrix of the  cycle $(1,2,\ldots,n)$,  in which case $\QAP(A,B)$ models the shortest Hamitonian cycle problem %traveling salesman problem  
 on the distance matrix~$A$.

\medskip
The main contribution of this paper is to provide a new class of QAP instances that admit a closed form optimal solution. These instances arise when both matrices $A$ and $B$ have a special ordered structure related to the so-called seriation problem, which is our second problem of interest and is described below.

\medskip
The {\em seriation problem} asks (roughly) to linearly order a set of objects in such a way that similar objects are close to each other and dissimilar objects are far apart. This classical problem was introduced by Robinson \cite{Robinson51} in 1951 for reconstructing the chronology of archeological graves  from information about their similarities. 
Beside archeological dating \cite{Robinson51,Kendall69}  the seriation problem has applications in many other areas including biology \cite{Meidanis98}, machine learning \cite{Ding04}, scheduling \cite{Fulkerson65} and sparse matrix reordering \cite{Barnard93}. 
A more exhaustive list of applications can be found in \cite{Innar10}.

More precisely, an $n\times n$ symmetric matrix $A$ is called a {\em Robinson similarity} matrix if its entries decrease monotonically in the rows and the columns when moving away from the main diagonal, i.e., if 
\begin{equation}\label{eqR}
A_{ik}\le \min\{A_{ij},A_{jk}\} \ \text{  for all } 1\le i\le j\le k\le n.
\end{equation}
Given a symmetric matrix $A=(A_{ij})$,  the seriation problem asks to find a simultaneous reodering of its rows and columns, i.e., a permutation $\pi$ of $[n]$, so that the permuted matrix $A_\pi=(A_{\pi(i)\pi(j)})$ is a Robinson similarity. If such a permutation exists then $A$ is said to be a {\em Robinsonian similarity}.
Analogously a symmetric matrix $B$ is called a {\em Robinson dissimilarity matrix}  if its entries increase monotonically in the rows and columns when moving away from the main diagonal, i.e., if $-B$ is a Robinson similarity; $B$ is then a {\em Robinsonian dissimilarity} if $-B$ is a Robinsonian similarity.
Note that the $0/1$ Robinsonian similarity matrices are exactly the symmetric matrices with the well known \textit{consecutive ones property}. 
 Recall that a matrix $A\in\oR^{n\times n}$ is a Toeplitz matrix if it has constant entries along its diagonals, i.e., if $A_{ij}=A_{i+1,j+1}$ for all $1\le i,j\le n-1$.  
 
Our main result in this paper can then be formulated as follows.  
Assume that $A$ is a Robinson similarity, $B$ is a Robinson dissimilarity and that  one of the two matrices $A$ or $B$ is a Toeplitz matrix. 
Then the identity permutation is an optimal solution for problem $\QAP(A,B)$ (Theorem~\ref{theomain2}). 
From this we  derive  a more general result when both matrices are Robinsonian  (Corollary~\ref{theomain3}). 
Hence our  result uncovers an interesting connection between QAP and the seriation problem and 
%Since there exist polynomial time algorithms for recognizing Robinsonian (dis)similarity matrices, this permit us to
  introduces a new class of QAP instances  which is solvable in polynomial time.

Our result can be seen as an analogue for symmetric matrices of the above mentioned result about QAP for monotone matrices, where we replace the monotonicity property by the Robinson property  (which implies unimodal  rows and columns). 

Moreover our result extends two previously known cases. 
The first case is when the Robinson similarity $A$ is a combination of (special) cut matrices and the Robinson dissimilarity $B$ is the Toeplitz matrix $((i-j)^2)$, considered in \cite{Fogel13}. This case (which in fact  has motivated our work) is  discussed  in Section \ref{sec2sum} below  (see Proposition \ref{prep:Rcut-matrices}).
The second class of instances (which was pointed out to us by a referee) is when the similarity matrix $A$ is the  adjacency matrix of the  path $(1,\ldots,n)$ (thus Toeplitz),  and $B$ is a Robinson dissimilarity  which is metric (i.e., $B_{ik}\le B_{ij}+B_{jk} \text{ for all } i,j,k \in [n]$)  and  strongly monotone  (i.e.,   
%$$B_{ik}\le B_{ij}+B_{jk}\ \text{ for all } i,j,k \in [n],$$
$B_{ jk}=B_{jl} \Longrightarrow B_{ik}=B_{il}$ and 
$B_{jk}=B_{ik}\Longrightarrow B_{jl}=B_{il}$ for all $1\le i<j<k<l\le n$.)
Then \cite[Lemma 10]{Christopher96} shows that $\QAP(A,B)$ is easily solvable, which also follows from our main result.
Interestingly the above class of  strongly monotone Robinson dissimilarity metrics   plays a central role in the recognition algorithm of \cite{Christopher96} for matrices that are permutation equivalent to Kalmanson matrices.

The rest of the paper is organized as it follows. In Section~\ref{sec2sum} we introduce the seriation problem and discuss some known algorithms to solve it in polynomial time, and we also discuss links to the 2-SUM problem and other instances of QAP. 
Section 3 contains the main result of the paper and in Section 4 we present some applications of it leading to new instances of polynomially solvable QAP's.

\subsubsection*{Notation}% and  some preliminaries}
Throughout, $\MP_n$  is the set of permutations of $[n]$, $\MS^n$ is the set of symmetric $n\times n$ matrices and $J$ is the all-ones matrix.
For $A,B\in \oR^{n\times n}$, $\langle A,B\rangle=\Tr(A^\sfT B)=\sum_{i,j=1}^n A_{ij}B_{ij}$ denotes the usual trace inner product on $\oR^{n\times n}$.  
For $\pi\in \MP_n$ and  $A\in \MS^n$, set $A_\pi=(A_{\pi(i)\pi(j)})_{i,j=1}^n\in \MS^n$. %We will use the following properties: g
For $A,B\in\MS^n$ and $\pi,\tau\in\MP_n$, we have:
\begin{equation}\label{eqpi}
{(A_\pi)}_\tau = A_{\pi\tau} \ \text{ and } \langle A,B_\tau\rangle =\langle A_{{\tau}^{-1}},B\rangle.
\end{equation}

Since $A$ is a Robinson dissimilarity if $-A$ is a Robinson similarity, we often refer to $A$ as a Robinson matrix if $A$ or $-A$ is a Robinson similarity matrix.
Clearly, the all-ones matrix $J$ is both a similarity and a dissimilarity Robinson matrix and thus
adding any multiple of $J$ preserves the Robinson property.
Therefore, for a Robinson matrix $A$, we may assume without loss of generality that $A$ is entrywise nonnegative.  
We may also assume without loss of generality that  all diagonal entries are equal in a Robinson similarity
and all diagonal entries are equal to 0 in a Robinson dissimilarity.
On the other hand, observe that when dealing with $\QAP(A,B)$, if one of the two matrices has zero diagonal entries, then the diagonal entries of the other matrix do not play a role.

\section{Seriation and 2-SUM}\label{sec2sum}

Given a set of $n$ objects to order and a similarity matrix $A=(A_{ij})$ which represents the pairwise correlations between the objects, a {\em consistent ordering} of the objects is a permutation $\pi$ of $[n]$ for which the permuted matrix $A_\pi$ is a Robinson similarity, i.e., satisfies the linear constraints (\ref{eqR}). 
The seriation problem consists in finding (it they exist) all possible consistent orderings of  the objects. 
%A consistent order does not necessarily exists and the seriation problem is said to be \textit{well posed} if a consistent ordering    exists, i.e., if the matrix $A$ is Robinsonian. This is also known in the literature as the noiseless setting and the case when no consistent order exists is referred to as the noisy setting (see e.g. \cite{Fogel13}).

It is straightforward to verify whether a given matrix $A\in\MS^n$ is a Robinson similarity. Moreover one can decide in polynomial time whether $A$ is Robinsonian and if so construct all consistent reorderings. 
Mirkin and Rodin first introduced in 1984 an $O(n^4)$ 
algorithm to recognize Robinsonian similarities. Later, Chepoi and Fichet  \cite{Chepoi97} and Seston  \cite{Seston08} improved the algorithm to respectively $O(n^3)$ and $O(n^2\log n)$ and, very recently, Prea and Fortin  \cite{Prea14} presented an $O(n^2)$ algorithm. 
These algorithms apply to recognize Robinsonian dissimilarities and thus also Robinson similarities as discussed above. They are based on a characterization of Robinsonian matrices  in terms of interval hypergraphs \cite{Fulkerson65}.

A completely different approach to recognize Robinsonian similarities was used by Atkins et al. \cite{Atkins98}, who introduced an interesting spectral sequencing algorithm.
Given a matrix $A\in \MS^n$, its {\em Laplacian matrix}  is the matrix  $L_A=\text{diag}(Ae)-A\in\MS^n$,   where $e$ is the all-ones vector and $\text{diag}(Ae)$ is the diagonal matrix whose diagonal  is given by the vector $Ae$. If $A\ge 0$ then $L_A$ is positive semidefinite and thus its smallest eigenvalue is $\lambda_1(L_A)=0$. Moreover, the second smallest eigenvalue $\lambda_2(L_A)$ is positive if $A$ is irreducible.
 The \textit{Fiedler vector} $y_F \in \oR^n$ of $A$ is the eigenvector corresponding to the second smallest eigenvalue $\lambda_2(L_A)$ of $L_A$, which is also known as the \textit{Fiedler value}.
The spectral algorithm of Atkins et {al}. \cite{Atkins98} relies  on the following properties of the Fiedler vector  of a Robinson similarity matrix.

\begin{theorem}\cite{Atkins98} \label{prep:Atkins R-matrices}
If $A \in \mathcal{S}^n$ is a Robinson similarity then it has a monotone Fiedler vector $y_F$, i.e., satisfying: 
$y_F(1)\le \ldots \le y_F(n)$ or $y_F(n)\le \ldots \le y_F(1)$.
\end{theorem}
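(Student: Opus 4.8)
The plan is to exploit the variational characterization of the Fiedler value. Since adding a multiple of $J$ changes neither the Robinson property nor the action of $L_A$ on $e^\perp$ (the diagonal of $A$ cancels in $L_A$, and $nI-J$ acts as a multiple of the identity on $e^\perp$), I would first assume $A\ge 0$, so that $L_A\succeq 0$ with $L_Ae=0$ and $\lambda_1(L_A)=0$. Then the Fiedler value is $\lambda_2(L_A)=\min\{y^\sfT L_Ay:\ \|y\|=1,\ e^\sfT y=0\}$, with Dirichlet form $y^\sfT L_Ay=\sum_{i<j}A_{ij}(y_i-y_j)^2$, and $y_F$ is precisely a minimizer. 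I would reduce to the case where the support graph $G_A=\{\{i,j\}:A_{ij}>0\}$ is connected, so that $\lambda_2(L_A)>0$, assembling the reducible case from its irreducible blocks. Because $\lambda_2$ may be a multiple eigenvalue, the aim is not that \emph{every} minimizer be monotone but that \emph{some} minimizer is, so I keep the freedom to choose a convenient vector in the eigenspace.

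The core is to show that some minimizer is monotone by a local exchange argument governed by the defining inequalities \eqref{eqR}. The key (and initially counterintuitive) point is that one must use the freedom to readjust the \emph{values} of $y$, and not merely permute a fixed multiset of entries: sorting the entries of a given vector can strictly \emph{increase} $\sum_{i<j}A_{ij}(y_i-y_j)^2$, since a large weight $A_{jk}$ pins $y_j,y_k$ together even at the expense of monotonicity. Thus the statement is genuinely spectral, and a non-monotone minimizer cannot simply be sorted. Instead I would argue that, if a chosen minimizer $y$ fails to be monotone, it must exhibit an interior strict local extremum, i.e. indices $i<j<k$ with $y_j>\max\{y_i,y_k\}$ (or the reversed valley); I would then flatten this bump by a mean- and norm-controlled perturbation and use \eqref{eqR} to bound the change in the Rayleigh quotient, reaching a contradiction with minimality unless $y$ is already monotone.

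To control which configurations can arise and to organize the exchange, I would bring in Fiedler's nodal-domain theorem: for connected $G_A$ the level sets $\{i:y_i\le t\}$ induce connected subgraphs. Combined with the consecutive-ones property of the support (by \eqref{eqR}, $A_{ij},A_{jk}>0$ whenever $A_{ik}>0$ for $i<j<k$), I would try to force these level sets to be intervals and then, being nested, initial (or final) segments of $[n]$, which is exactly monotonicity of $y_F$.

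The main obstacle I anticipate is making the flattening step rigorous and genuinely \emph{quantitative}: the sign of the change in the Rayleigh quotient is controlled by \eqref{eqR}, but the argument cannot rely on the support alone, since connectivity of a level set permits \textbf{jump-over} patterns — an edge $\{a,b\}$ with $a<j<b$ lets a connected level set skip $j$ — which the support structure cannot by itself exclude; one must therefore use the actual magnitudes of the entries. Secondary difficulties are eigenvalue multiplicity and entrywise ties, which I would handle by selecting a minimizer with the fewest descents and showing it has none, together with the reducible case, assembled from the irreducible blocks.
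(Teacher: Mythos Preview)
The paper does not prove this theorem: it is stated as a quotation from \cite{Atkins98} and used only as background motivation for the spectral seriation algorithm discussed in Section~\ref{sec2sum}. There is therefore no proof in the present paper to compare your proposal against.

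Regarding the proposal itself: it is a reasonable plan but not yet a proof. You have correctly identified that the statement is genuinely spectral (merely sorting the entries of a given eigenvector can raise the Rayleigh quotient), and the variational/exchange framework together with Fiedler's nodal-domain theorem is indeed the circle of ideas behind the original argument in \cite{Atkins98}. However, the decisive step---your ``flattening'' of an interior strict extremum and the resulting sign control on the change in the Rayleigh quotient via \eqref{eqR}---is only announced, and you yourself flag it as the main unresolved obstacle. The alternative nodal-domain route is likewise left incomplete: you observe that connectivity of level sets in the support graph does not force them to be intervals because of jump-over edges, and you do not supply the missing argument that exploits the actual magnitudes of the entries of $A$. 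Until one of these two lines is carried through quantitatively, what you have is a strategy outline rather than a proof.
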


\begin{theorem}\cite{Atkins98}\label{prep:Atkins Fiedler vector}
Assume that  $A \in \mathcal{S}^n$ is a Robinsonian similarity, that  its Fiedler value is simple and  that the  Fiedler vector $y_F$ has no repeated entries. Let $\pi$ be  the permutation induced by sorting monotonically the values of $y_F$ (in increasing or decreasing order). Then the matrix $A_\pi$ is a Robinson similarity matrix.
\end{theorem}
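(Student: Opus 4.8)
The plan is to leverage the fact that $A$ is \emph{Robinsonian} to reduce to the already-ordered case handled by Theorem~\ref{prep:Atkins R-matrices}, and then to transport the monotonicity statement back through the reordering by using the permutation-equivariance of the Laplacian and of its eigenvectors.

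Concretely, first I would fix a permutation $\sigma\in\MP_n$ such that $A_\sigma$ is a Robinson similarity, which exists by the hypothesis that $A$ is Robinsonian. Writing $P$ for the permutation matrix of $\sigma$ (with $(Px)_i=x_{\sigma(i)}$, so that $A_\sigma=PAP^\sfT$), I would check the identity $L_{A_\sigma}=P\,L_A\,P^\sfT$ using $P^\sfT e=e$ together with $\mathrm{diag}(PAe)=P\,\mathrm{diag}(Ae)\,P^\sfT$. This shows $L_{A_\sigma}$ and $L_A$ are orthogonally similar, hence have the same spectrum; in particular the Fiedler value of $A_\sigma$ equals that of $A$ and is therefore simple as well. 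Moreover, if $y_F$ is the Fiedler vector of $A$ then $Py_F$ is an eigenvector of $L_{A_\sigma}$ for the same eigenvalue, and since the Fiedler value is simple it is, up to scaling, the Fiedler vector of $A_\sigma$; note also that $Py_F$ has pairwise distinct entries because $y_F$ does.

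Next, applying Theorem~\ref{prep:Atkins R-matrices} to the Robinson similarity $A_\sigma$ yields that its Fiedler vector is monotone; by the previous step this means the vector $Py_F=(y_F(\sigma(i)))_{i=1}^n$ is monotone, i.e.\ $\sigma$ sorts the entries of $y_F$ in increasing or decreasing order. Because the entries of $y_F$ are pairwise distinct, the permutation that sorts $y_F$ in a prescribed direction is unique, so $\sigma$ must coincide with the sorting permutation $\pi$ (when $Py_F$ is increasing) or with its composition with the reversal $i\mapsto n+1-i$ (when decreasing). In the first case $A_\pi=A_\sigma$ is Robinson and we are done; in the second case $A_\pi$ is obtained from the Robinson matrix $A_\sigma$ by reversing the order of rows and columns, an operation that clearly preserves the defining inequalities \eqref{eqR}.

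The genuinely substantive ingredient is Theorem~\ref{prep:Atkins R-matrices}, which we are entitled to assume; granting it, the only delicate point is to ensure that the monotone Fiedler vector produced there is literally a scalar multiple of the permuted vector $Py_F$, and this is exactly where the simplicity of the Fiedler value is essential, as it forces the relevant eigenspace to be one-dimensional. The distinctness of the entries of $y_F$ then does the rest, guaranteeing that ``sorting $y_F$'' determines $\pi$ unambiguously and pins it to the hidden Robinson ordering $\sigma$ up to the harmless reversal.
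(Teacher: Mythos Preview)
The paper does not actually prove this theorem: it is quoted from \cite{Atkins98} as background material, so there is no ``paper's own proof'' to compare against. That said, your argument is correct and is precisely the natural reduction one would expect: pull back to a Robinson ordering $\sigma$, use the permutation-equivariance $L_{A_\sigma}=PL_AP^\sfT$ to identify the Fiedler eigenspaces, invoke Theorem~\ref{prep:Atkins R-matrices} on $A_\sigma$, and then use simplicity of the Fiedler value together with distinctness of the entries of $y_F$ to conclude that $\sigma$ agrees with the sorting permutation $\pi$ up to the order-reversing involution, which preserves the Robinson property. All the steps check out; in particular your verification that $\mathrm{diag}(PAe)=P\,\mathrm{diag}(Ae)\,P^\sfT$ and the handling of the reversal case via $(A_\pi)_\rho=A_{\pi\rho}$ are fine.
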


In other words, the above results show  that sorting monotonically the Fiedler vector $y_F$ reorders $A$ as a Robinson similarity. The complexity of the algorithm of \cite{Atkins98} in the general case is given by $O(n(T(n)+n\log n))$, where $T(n)$ is the complexity of computing (approximately) eigenvalues of an $n\times n$ symmetric matrix. 

%In an independent work, 
Barnard et {al.}  \cite{Barnard93} had earlier used the same spectral algorithm to solve the {2-SUM problem}.
Given a matrix $A\in\MS^n$, the 2-SUM problem is the special instance $\QAP(A,B)$ of QAP, where the distance matrix is $B=((i-j)^2)_{i,j=1}^n$. That is,
\begin{equation}\label{2-SUM}
\text{2-SUM}\quad\quad\quad\quad \min_{\pi \in \mathcal{P}_n} {\sum_{i,j=1}^n{A_{\pi(i)\pi(j)}(i-j)^2}}.
\end{equation}
The  authors of \cite{George97} show that 2-SUM   is an NP-complete problem and they use the above spectral method of reordering the Fiedler vector of $A$ to produce a heuristic solution for problem (\ref{2-SUM}).
This in turn permits to bound  important matrix structural parameters, like  envelope-size and bandwidth.
However, no assumption is made on the structure of the matrix $A$. 
As observed in~\cite{George97,Fogel13}, the following fact can be used to motivate the spectral approach for  2-SUM. If we define the vectors  $x=(1,\ldots,n)^\sfT, x_\pi=(\pi(1),\ldots,\pi(n))^\sfT \in \oR^n$ for $\pi\in\MP_n$, then  $\sum_{i,j=1}^n A_{ij}(\pi(i)-\pi(j))^2 = (x_\pi)^\sfT  L_Ax_\pi$ and thus computing the Fiedler value arises as a natural continuous relaxation for the 2-SUM problem (\ref{2-SUM}). 

Fogel et {al.} \cite{Fogel13}  point out an interesting  connection between the  seriation  and 2-SUM problems. 
They consider  a special class of Robinson similarity matrices for which they  can show that the identity permutation is optimal for the 2-SUM problem (\ref{2-SUM}). Namely they consider the following {\em cut matrices}:
given two integers $1\le u\le v\le n$, the {\em cut matrix} $\CUT(u,v)$ is the symmetric $n\times n$ matrix  with $(i,j)$-th entry 1 if $u\le i,j\le v$ and 0 otherwise. Clearly each cut matrix is a Robinson similarity and thus  conic combinations of cut matrices as well. The following result is shown in \cite{Fogel13}, for which we give a short proof.

\vspace{5ex}

\begin{proposition}\cite{Fogel13}\label{propFogel}
\label{prep:Rcut-matrices}
If $A\in \MS^n$ can be written as a conic combination of  cut matrices, then the identity permutation  is optimal for the 2-SUM problem (\ref{2-SUM}).
More generally if, for some $\pi\in\MP_n$,  $A_\pi$ can be written as a conic combination of  cut matrices, then $\pi$   is optimal for the 2-SUM problem (\ref{2-SUM}).
\end{proposition}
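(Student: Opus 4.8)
The plan is to exploit the linearity of the 2-SUM objective in the matrix $A$, which reduces the whole statement to the case of a single cut matrix. Writing $A=\sum_t c_t\,\CUT(u_t,v_t)$ with $c_t\ge 0$, the objective in (\ref{2-SUM}) becomes $\sum_t c_t\, F_t(\pi)$, where $F_t(\pi)=\sum_{i,j}\CUT(u_t,v_t)_{\pi(i)\pi(j)}(i-j)^2$. Since the $c_t$ are nonnegative, it suffices to show that one and the same permutation --- namely the identity --- minimizes every single-cut term $F_t$ over $\MP_n$; summing then shows that the identity minimizes the whole objective.

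So fix a cut matrix $\CUT(u,v)$ and set $m=v-u+1$. Since $\CUT(u,v)_{kl}=1$ exactly when both $k$ and $l$ lie in $\{u,\dots,v\}$, the associated objective equals $\sum_{i,j\in S}(i-j)^2$, where $S=\pi^{-1}(\{u,\dots,v\})$ is the $m$-subset of $[n]$ that $\pi$ maps onto $\{u,\dots,v\}$. As $\pi$ ranges over $\MP_n$, the set $S$ ranges over all $m$-element subsets of $[n]$; moreover the identity yields $S=\{u,\dots,v\}$, a block of consecutive integers. Hence the single-cut case reduces to the purely combinatorial claim that, among all $m$-subsets $S\subseteq[n]$, the quantity $f(S):=\sum_{i,j\in S}(i-j)^2$ is minimized by a set of consecutive integers.

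This combinatorial minimization is the technical heart of the proof, and I would prove it by a gap/monotonicity argument. Writing $S=\{s_1<\dots<s_m\}$ and introducing the gaps $d_c=s_{c+1}-s_c\ge 1$, every pairwise difference factors as $s_b-s_a=\sum_{c=a}^{b-1}d_c$, so that $f(S)=2\sum_{1\le a<b\le m}\big(\sum_{c=a}^{b-1}d_c\big)^2$. Since all $d_c$ are positive, each summand is a nondecreasing function of every gap $d_c$, and therefore so is $f(S)$. Consequently $f$ attains its minimum by taking each gap equal to its least possible value $d_c=1$, which forces $S$ to be a block of $m$ consecutive integers; the value $f(S)$ depends only on the gaps and not on the location of $S$, so any such block attains this minimum, in particular the block $\{u,\dots,v\}$ produced by the identity (which fits in $[n]$ because $m\le n$). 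This shows the identity is optimal for each single cut, and by the first paragraph for every conic combination, proving the first assertion.

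The ``more generally'' statement then follows by relabeling. If $A_\pi$ is a conic combination of cut matrices, the first part gives that the identity is optimal for the 2-SUM problem with matrix $A_\pi$, i.e. $\sum_{i,j}(A_\pi)_{ij}(i-j)^2\le \sum_{i,j}(A_\pi)_{\tau(i)\tau(j)}(i-j)^2$ for all $\tau\in\MP_n$. Using $(A_\pi)_\tau=A_{\pi\tau}$ from (\ref{eqpi}), the right-hand side is exactly the 2-SUM objective evaluated at $\pi\tau$, while the left-hand side is its value at $\pi$. As $\tau$ runs over $\MP_n$ the product $\pi\tau$ runs over all of $\MP_n$, so $\pi$ is optimal. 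The only genuinely nontrivial step is the combinatorial minimization of $f(S)$; the gap-monotonicity observation is what makes it transparent, and once it is in place the reductions by linearity and by relabeling are routine.
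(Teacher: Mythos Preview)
Your proof is correct and follows essentially the same approach as the paper: reduce by linearity to a single cut matrix, rewrite the objective as $\sum_{i,j\in S}(i-j)^2$ over an $m$-subset $S\subseteq[n]$, and show this is minimized on a consecutive block. Your gap-monotonicity argument is just a rephrasing of the paper's one-line observation that if $S=\{i_1<\dots<i_m\}$ then $|i_r-i_s|\ge|r-s|$, so each squared difference already dominates termwise.
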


\begin{proof}
First we show the result when $A$ is a cut matrix. Say, $A=\CUT(u,v)$ and set $t=v-u+1$. Then, we need to show that
{\bf $\sum_{i,j=u}^v (\pi(i)-\pi(j))^2\ge \sum_{i,j=u}^v (i-j)^2$}  for any permutation $\pi\in\MP_n$.
Suppose that $\pi$ maps the elements of the interval $[u,v]$ to the elements of the set $\{i_1,\ldots,i_t\}$, 
ordered as 
$1\le i_1< \ldots <i_t\le n$.
% Let $1\le i_1< \ldots <i_t\le n$ the mapping of $u,\dots,v$ according to $\pi$ and ordered for nondecreasing values.
Because the left-hand side of the above inequality involves all pairs of indices in the interval $[u,v]$, equivalently, we need to show that $$\sum_{1\le r<s\le t} (i_r-i_s)^2 \ge \sum_{1\le r<s\le t}(r-s)^2.$$
Now the latter easily follows from the fact that $|i_r-i_s|\ge |r-s|$ for all $r,s$. Hence we have shown that the identity permutation is an optimal solution of problem (\ref{2-SUM}) when $A$ is a cut matrix and this easily implies that this also holds when $A$ is a conic combination of cut matrices. The second statement follows as a direct consequence.
 \end{proof}

In other words, the above result shows that for similarity matrices as in Proposition \ref{prep:Rcut-matrices}, any permutation reordering $A$ as a Robinson matrix also solves (\ref{2-SUM}).
As not every Robinson similarity is a conic combination of cut matrices, this raises the question whether the above result extends to the case when $A$ is an arbitrary Robinson similarity matrix.

There is a second possible way in which one may want to generalize the result of Proposition \ref{propFogel}.
Indeed,  the 2-SUM problem (\ref{2-SUM}) is the instance of $\QAP(A,B)$, where the distance matrix is $B=((i-j)^2)_{i,j=1}^{n}$, which turns  out to be  a Toeplitz dissimilarity Robinson matrix. 
In fact there are many other interesting classes of QAP whose distance matrix $B$ is a Toeplitz Robinson dissimilarity matrix. For instance, $\QAP(A,B)$ models the minimum linear arrangement (aka 1-SUM) problem when
$B=(|i-j|)_{i,j=1}^n$ and, more generally, the $p$-SUM problem when we have $B=(|i-j|^p)_{i,j=1}^{n}$ (for  $p\ge 1$),  and $\QAP(A,B)$ models the minimum bandwith problem when $A$ is the adjacency matrix of a graph and $B$ is of the form  $B^\Delta_n$,  as defined in relation (\ref{eqBDelta}) below.
For more details on these and other graph (matrix) layout problems with pratical impact we refer to the survey \cite{Diaz02} and references therein.

This thus raises  the further question whether the result of Proposition \ref{propFogel}  extends to instances of $\QAP(A,B)$, where $B$ is an arbitrary Toeplitz dissimilarity matrix. This is precisely what we do in this paper.
We remove both assumptions on $A$ and $B$ and show that the identity permutation is optimal for $\QAP(A,B)$ when $A$ is any 
Robinson similarity and $B$ is any Robinson dissimilarity, assuming that $B$ (or $A$)    has a Toeplitz structure.

\section{The main result}
Let $A$ be a Robinson similarity matrix and let $B$ be a Toeplitz Robinson dissimilarity matrix. 
The first  key (easy) observation  is that we can decompose $B$ as a conic combination of 0/1 Toeplitz Robinson dissimilarities.
Given an integer $\Delta \in [n]$, we define the symmetric matrix 
$B^\Delta_n\in \MS^n$ with entries
\begin{equation}\label{eqBDelta}
\left( B_n^{\Delta}\right)_{ij}=
\begin{cases} 
1 \quad \text{if } |i-j|\geq n-\Delta \\
0 \quad \text{else }
\end{cases}  \qquad \text{ for } \  i,j=1,\dots,n.
\end{equation}
Note that for $\Delta = n$, we have that $B^{\Delta}_n = J$.
Clearly,  each matrix $B^\Delta_n$ is a Toeplitz matrix and a Robinson dissimilarity. In fact all Toeplitz Robinson dissimilarities can be decomposed in terms of these matrices $B^\Delta_n$.

\begin{lemma}\label{lemToeplitz}%\label{conic decomposition anti-R Toeplitz matrices}
Let $B\in \MS^n$ be a Toeplitz matrix and let $\beta_0,\ldots,\beta_{n-1}\in \oR$ such that $B(i,j)=\beta_k$ for all $i,j\in [n]$ with $|i-j|=k$ for $0\le k\le n$. 
Then,
\begin{equation}
B=\beta_0J+\sum_{k=1}^{n-1}(\beta_k-\beta_{k+1})B^{n-k}_n.
\end{equation}
If moreover $B$ is a Robinson dissimilarity, i.e., if $\beta_0=0\le \beta_1\le \ldots\le \beta_{n-1}$, then $B$ is a conic combination of the matrices $B^\Delta_n$ (for $\Delta=1,\ldots,n-1$).
\end{lemma}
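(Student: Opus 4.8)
The plan is to establish the identity entrywise by a telescoping (summation-by-parts) argument, and then read off nonnegativity of the coefficients from the monotonicity of the $\beta_k$. The key observation is that each matrix $B_n^{\Delta}$ is precisely the $0/1$ indicator of the band $\{(i,j): |i-j|\ge n-\Delta\}$; hence $B_n^{n-k}$ is the indicator of $\{|i-j|\ge k\}$, with the two extreme cases $B_n^{n}=J$ (the band $|i-j|\ge 0$, satisfied by every pair) and $B_n^{n-k}=0$ once $k\ge n$ (the empty band). In particular, the single diagonal at distance exactly $k$ is captured by the difference $B_n^{n-k}-B_n^{n-(k+1)}$.

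Concretely, I would first write $B=\sum_{k=0}^{n-1}\beta_k E_k$, where $E_k$ denotes the $0/1$ matrix whose $(i,j)$ entry is $1$ iff $|i-j|=k$, and then substitute $E_k=B_n^{n-k}-B_n^{n-(k+1)}$. Re-summing this telescoping expression by parts collects, for each fixed $k$, a coefficient of $B_n^{n-k}$ equal to a consecutive difference of the $\beta$-sequence, while the $k=0$ term contributes the $\beta_0 J$ summand. Comparing both sides on a pair $(i,j)$ with $|i-j|=m$ then reduces to the telescoping identity $\beta_0+\sum_{k=1}^{m}(\beta_k-\beta_{k-1})=\beta_m$, which holds trivially; this verifies the decomposition.

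I expect the only real (and minor) obstacle to be the boundary bookkeeping: keeping track of the endpoints of the telescoping sum, i.e. that the top term produces $J=B_n^{n}$ and that the bottom band lands on the corner matrix $B_n^{1}$, while confirming that no index runs outside $1,\dots,n-1$ (here $B_n^{0}=0$ takes care of the vanishing lower endpoint). I would sanity-check the exact form of the coefficients on a small case, say $n=3$, before committing to the general formula. Finally, the conic-combination claim is then immediate: under the Robinson condition $\beta_0=0\le\beta_1\le\cdots\le\beta_{n-1}$ the leading $\beta_0 J$ term vanishes, and every coefficient appearing in the sum is a difference of consecutive, nondecreasing $\beta_k$, hence nonnegative; thus $B$ is a conic combination of the matrices $B_n^{\Delta}$ for $\Delta=1,\dots,n-1$.
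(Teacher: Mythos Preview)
Your approach is correct and is precisely the ``direct verification'' the paper offers as its entire proof; there is nothing more to compare. One remark: your telescoping correctly produces coefficients $(\beta_k-\beta_{k-1})$ for $B_n^{n-k}$, whereas the displayed formula in the statement reads $(\beta_k-\beta_{k+1})$, which appears to be a typographical slip (with $\beta_{k+1}$ the coefficients would be nonpositive under the Robinson hypothesis, contradicting the conic-combination claim, and your $n=3$ sanity check would immediately reveal the discrepancy).
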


\begin{proof}
Direct verification.
\end{proof}

Our main result, which we show in this section, is that the identity permutation is optimal for $\QAP(A,B_n^{\Delta})$ for any integer $1 \leq \Delta \leq n-1$.
%As we will see in Section \ref{secapplication}, this result implies that the identity permutation is optimal for $\QAP(A,B)$.
We will mention its application to $\QAP(A,B)$ in Section \ref{secapplication}.

\begin{theorem}\label{theomain1}
%\label{identity optimal for R-matrices}
Let $A\in \MS^n$ be a Robinson similarity matrix and let 
$\Delta \in [n-1]$. Then, for any permutation $\pi$  of $[n]$, we have:
\begin{equation}\label{eq1}
\langle A_\pi,B^\Delta_n\rangle
%=\langle \Pi A \Pi^\sfT , B^\Delta \rangle 
\geq \langle A, B^\Delta_n\rangle.
\end{equation}
\end{theorem}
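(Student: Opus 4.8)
The plan is to prove \eqref{eq1} in two stages: first reduce the arbitrary Robinson similarity $A$ to the $0/1$ case, and then settle the $0/1$ case by a combinatorial argument whose engine is exactly the elementary estimate already used in Proposition~\ref{propFogel}.

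First I would slice $A$ along its level sets. Since adding a multiple of $J$ changes both sides of \eqref{eq1} by the same amount, I may assume $A\ge 0$ and write $A=\int_0^\infty A^{(t)}\,dt$, where $A^{(t)}$ is the $0/1$ matrix with $(A^{(t)})_{kl}=1$ iff $A_{kl}\ge t$. Each $A^{(t)}$ is again a Robinson similarity, because \eqref{eqR} is preserved by thresholding, and thresholding commutes with permuting indices, so $(A_\pi)^{(t)}=(A^{(t)})_\pi$. As $\langle\,\cdot\,,B^\Delta_n\rangle$ is linear in its first argument, \eqref{eq1} for $A$ follows by integrating \eqref{eq1} for each $A^{(t)}$. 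Hence it suffices to treat the case where $A$ is a $0/1$ Robinson similarity.

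Next I would recast the $0/1$ case combinatorially. Writing $d=n-\Delta$ and $\sigma=\pi^{-1}$, the identity $\langle A_\pi,B^\Delta_n\rangle=\langle A,(B^\Delta_n)_\sigma\rangle$ coming from \eqref{eqpi} shows that $\langle A_\pi,B^\Delta_n\rangle=\sum_{k,l}A_{kl}\,\mathbf 1[\,|\sigma(k)-\sigma(l)|\ge d\,]$, i.e.\ (up to the harmless factor $2$ and the diagonal) the number of ``long'' $1$-entries $\{k,l\}$ whose indices are sent by $\sigma$ to distance at least $d$. Thus \eqref{eq1} asserts that the identity layout minimizes the number of such long $1$-entries. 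The $1$-entries of a $0/1$ Robinson similarity form a graph $G$ whose closed neighborhoods are intervals of consecutive indices (an indifference graph), the identity being a compatible ordering: letting $r_k$ be the largest index with $A_{k,r_k}=1$, each set $Q_k=\{k,k+1,\dots,r_k\}$ is a clique, and a direct count gives $\langle A,B^\Delta_n\rangle=2\sum_k(|Q_k|-d)^+$.

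The engine of the proof is a single-clique estimate, exactly the mechanism behind Proposition~\ref{propFogel}: if a clique of $G$ on $c$ vertices is mapped by $\sigma$ to positions $p_1<\dots<p_c$, then $|p_r-p_s|\ge|r-s|$ forces at least $(c-d)^+$ of its internal edges to be long, for every $\sigma$. For a single clique (a cut matrix) this already gives \eqref{eq1}, recovering the known case. The hard part is to aggregate these per-clique bounds over all of $G$: the cliques $Q_k$ overlap heavily, so one cannot merely sum the estimate, and I expect this to be the main obstacle. A fractional clique cover by intervals of consecutive indices need not exist, and a naive adjacent-transposition exchange is not monotone, since moving a vertex toward the middle of the layout can shorten its own long edges at the expense of lengthening others. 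I would therefore aggregate by induction on $n$, peeling off the endpoint vertex $n$, which is simplicial because its closed neighborhood $[l_n,n]$ is a clique: bound the long edges incident to $n$ by the single-clique estimate applied to $N[n]$, account for the stretch changes that deleting $n$ induces on the remaining edges, and reduce to $G-n$ with its inherited canonical ordering. Making this bookkeeping of the two competing effects precise — the long edges created at $n$ versus the edges that $n$ stretches — is the crux of the argument.
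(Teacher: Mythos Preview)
Your level-set reduction to $0/1$ Robinson similarities and the combinatorial reformulation are correct, and your plan to argue by induction on $n$, peeling off the simplicial vertex $n$, is the right shape. But the proposal stops precisely at the point where the actual work begins: you yourself call the bookkeeping of ``long edges created at $n$ versus edges that $n$ stretches'' the crux, and you do not carry it out. That is a genuine gap, not a routine detail. Concretely, after deleting vertex $n$ the remaining vertices occupy positions $[n]\setminus\{\sigma(n)\}$; compressing this to a layout on $[n-1]$ shortens every edge that straddles position $\sigma(n)$ by exactly $1$, while the threshold drops from $d$ to $d$ (equivalently $\Delta$ drops to $\Delta-1$). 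You must show that the number of long edges lost in this compression is compensated by the long edges incident to $n$, and the single-clique estimate on $N[n]$ alone does not give this: many of those clique edges are not incident to $n$ and are already counted in the inductive term.

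The paper's proof attacks exactly this bookkeeping, and it does so without the $0/1$ reduction, working directly with a general Robinson similarity $A$. Letting $k=\pi^{-1}(n)$, the authors construct an explicit subset $F\subseteq E^\Delta_n$ of size $\Delta$ (the pairs to be ``charged'' against the last row of $A$) and a permutation $\tau$ such that the complement $R=E^\Delta_n\setminus F$ is a relabeling $\tau(E^{\Delta-1}_{n-1})$. The Robinson property and a distinctness condition on the values $\min\{\pi(i),\pi(j)\}$ over $F$ give $\sum_{\{i,j\}\in F}A_{\pi(i)\pi(j)}\ge\sum_{i=1}^\Delta A_{i,n}$, and induction handles $R$. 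The construction of $F$ is a four-case analysis depending on whether $k$ lies among the first $\Delta$ rows and/or the last $\Delta$ columns of $B^\Delta_n$; it is this explicit construction that replaces your unfinished bookkeeping. Your $0/1$ reduction is a clean preliminary move, but it does not bypass this step: the difficulty is entirely on the $B$-side (how the support $E^\Delta_n$ splits when one position is removed), and it is the same whether $A$ is $0/1$ or not.
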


\begin{proof}
Let $E^\Delta_n$ denote the support of the matrix $B^\Delta_n$, i.e., the set of upper triangular positions where $B^\Delta_n$ has a nonzero entry. That is:
\begin{equation*}
E^\Delta_n=\{\{i,n-\Delta+j\}: 1\le i\le j\le \Delta\}.
\end{equation*}
Then we can reformulate the inner products in (\ref{eq1}) as 
%just focus on the inner product on the support $E^\Delta_n$ as it follows:
\begin{equation*}
\langle A_\pi,B^{\Delta}_{n}\rangle = \sum_{i,j=1}^{n}{A_{\pi(i),\pi(j)}\left( B^{\Delta}_{n}\right) _{i,j}} = 2 \sum_{\{i,j\}\in E^\Delta_n} A_{\pi(i),\pi(j)},
\end{equation*}

\begin{equation*}
\langle A, B^{\Delta}_{n} \rangle = \sum_{i,j=1}^{n}{A_{i,j}\left( B^{\Delta}_{n}\right) _{i,j}} = 2 \sum_{\{i,j\}\in E^\Delta_n} A_{i,j},
\end{equation*}
and (\ref{eq1}) is equivalent to the following inequality:
\begin{equation}\label{eq2}
\sum_{\{i,j\}\in E^\Delta_n} A_{\pi(i),\pi(j)} \ge \sum_{\{i,j\}\in E^\Delta_n} A_{ij}.
\end{equation}
%The proof is 
We show the inequality (\ref{eq2}) using  induction on $n\ge 2$.
The base case $n=2$ is trivial, since then $\Delta=1$ and both summations in (\ref{eq2}) are  identical.
 We now assume that the result holds for $n-1$ and we show that it also holds for $n$. 
 For the remaining of the proof, we fix a Robinson similarity matrix $A\in \MS^n$, an integer $\Delta \in [n-1]$ and a permutation $\pi$ of $[n]$. Moreover we let $k\in [n]$ denote the index such that  $n=\pi(k)$. 
% We let  $A{'}, B' \in \MS^{n-1}$ denote the principal submatrices obtained by deleting the row and column indexed by $n$ in $A$ and $B$, respectively. Then $A'$ is again a Robinson similarity matrix (now of size $n-1$) and $B'=B^{\Delta-1} _{n-1}$ has for support  the set $E_{n-1}^{\Delta-1}$ (defined  in the same fashion as $E_{n}^{\Delta}$). 
 
The key idea in the proof is to show that  there exist a subset $F\subseteq E^\Delta_n$  and a permutation $\tau$ of $[n]$ satisfying  the following properties:
\begin{itemize}
\item[(C1)] $|F|=\Delta$,
\item[(C2)] the indices $\min\{\pi(i),\pi(j)\}$ for the pairs $\{i,j\}\in F$ are pairwise distinct,
%\item[(c)] $F$ contains all the pairs $\{i,j\}\in E^\Delta_n$ such that $n\in \{\pi(i),\pi(j)\}$,
\item[(C3)] $\tau(n) = k$ and the set $R:=E^\Delta_n\setminus F$ satisfies 
\begin{equation*}
R=\tau(E^{\Delta -1}_{n-1}):=\{\{\tau(i),\tau(j)\}: \{i,j\}\in E^{\Delta-1}_{n-1}\}.
\end{equation*}
\end{itemize}
Here the set  $E_{n-1}^{\Delta-1}$ is the support of the matrix $B_{n-1}^{\Delta-1}$, defined by
$$E_{n-1}^{\Delta-1}=\{\{i,n-\Delta+j\}: 1\le i\le j\le \Delta-1\},$$
so that $E_n^{\Delta}$ is partitioned into the two sets  $\{\{i,n\}: 1\le i\le \Delta\}$ and $E^{\Delta-1}_{n-1}$. 
%in the same fashion as $E_{n}^{\Delta}$.

In a first step, we show (in Claim \ref{claim1} below) that if we can find  a set $F$ and a permutation $\tau$ satisfying (C1)-(C3), then we can conclude the proof of the inequality (\ref{eq2}) using the induction assumption.
The proof relies on the following idea: we split the summation
\begin{equation}\label{eqA3}
\Sigma_\pi(A) := \sum_{\{i,j\}\in E^\Delta_n} A_{\pi(i),\pi(j)}
\end{equation} 
into two terms, obtained by summing over the set $F$ and over its complement $R$, and we show that the first term is at least $\sum_{i=1}^\Delta A_{in}$ (using the conditions (C1)-(C3)) and that the second term is at least $\sum_{\{i,j\}\in E^{\Delta-1}_{n-1}} A_{ij}$ (using the induction assumption applied to the smaller Robinson similarity $(A_{ij})_{i,j=1}^{n-1}$).

\begin{claim}\label{claim1}
Assume that there exist a set $F\subseteq E^\Delta_n$ and a permutation $\tau$ of $[n]$ satisfying (C1)-(C3), then the inequality (\ref{eq2}) holds.
\end{claim}

\begin{proof} {\em (of Claim \ref{claim1})}.
Let us decompose the summation $\Sigma_\pi(A)$ from (\ref{eqA3}) 
%\begin{equation*}
%\Sigma_\pi(A) := \sum_{\{i,j\}\in E^\Delta_n} A_{\pi(i),\pi(j)}
%\end{equation*} 
as the sum of the two terms:
\begin{equation}\label{eq0}
\Sigma_\pi(A)=\Sigma_{\pi,F}(A)+\Sigma_{\pi,R}(A),
\end{equation} 
where we set:
\begin{equation*}
\Sigma_{\pi,F}(A):= \sum_{\{i,j\}\in F} A_{\pi(i),\pi(j)}, \quad
\Sigma_{\pi,R}(A):=  \sum_{\{i,j\}\in R} A_{\pi(i),\pi(j)}.
\end{equation*}
We now bound each term separately. 
First we consider the term $\Sigma_{\pi,F}(A)$.
As  $A$ is a Robinson similarity matrix, it follows that for all indices $i,j\in [n]$:
\begin{equation*}
A_{ij}\ge A_{n, \min\{i,j\}}.
\end{equation*}
Hence, we can deduce:
\begin{equation}\label{eq1a}
\Sigma_{\pi,F}(A)= \sum_{\{i,j\}\in F} A_{\pi(i),\pi(j)} \geq \sum_{\{i,j\}\in F} A_{n, \min \{\pi(i),\pi(j)\}}
\ge \sum_{i=1}^\Delta A_{n,i},
\end{equation}
where for the right most inequality we have used the conditions (C1),(C2) combined with the fact  that $A$ is a Robinson similarity.
%So we have proved:
%\begin{equation}\label{eq1a}
%\Sigma_{\pi,F}(A) \ge \sum_{i=1}^\Delta A_{i,n},\ \ 
%\end{equation}

We now consider the second term $\Sigma_{\pi,R}(A)$. 
Define the permutation $\sigma= \pi\tau$. Then, by (C3), we have that 
$\sigma(n)=\pi(k)=n$ and thus $\sigma (E_{n-1}^{\Delta-1})= \pi(\tau(E_{n-1}^{\Delta-1}))=\pi(R)$. 
We can then write:
\begin{equation}\label{eq2a}
\Sigma_{\pi,R}(A) = \sum_{\{i,j\}\in E^{\Delta-1}_{n-1}} A_{\sigma(i),\sigma(j)}.
\end{equation}
%\begin{equation}\label{eq2a}
%\Sigma_{\pi,R}(A) = \sum_{\{i,j\}\in E^{\Delta-1}_{n-1}} A_{\sigma(i),\sigma(j)}.
%\end{equation}
As $\sigma(n)=n$, the permutation $\sigma$ of $[n]$ induces a permutation $\sigma{'}$ of $[n-1]$. 
We let  $A{'}, B' \in \MS^{n-1}$ denote the principal submatrices obtained by deleting the row and column indexed by $n$ in $A$ and in $B^\Delta_n$, respectively. Then $A'$ is again a Robinson similarity matrix (now of size $n-1$) and $B'=B^{\Delta-1} _{n-1}$ is supported by the set $E_{n-1}^{\Delta-1}$. 
Then, using the induction assumption applied to $A'$, $\Delta-1$ and $\sigma'$, we obtain:
%according to the induction assumptions, for any possible permutation $\sigma^{'}$ of $[n-1]$, the following holds:
\begin{equation}
\Sigma_{\sigma{'}}(A{'}):=
\sum_{\{i,j\} \in E_{n-1}^{\Delta-1}}{A'_{\sigma(i),\sigma(j)}} \geq \Sigma_{\text{id}}(A^{'}) := \sum_{\{i,j\} \in E_{n-1}^{\Delta-1}}{A'_{i,j}}.
\end{equation}
Using (\ref{eq2a}), we get:
\begin{equation}\label{eq2b}
\Sigma_{\pi,R}(A) = \Sigma_{\sigma{'}}(A{'})  \geq \Sigma_{\text{id}}(A^{'})= \sum_{\{i,j\} \in E_{n-1}^{\Delta-1}}{A_{i,j}}.
\end{equation}
Finally, combining (\ref{eq0}),(\ref{eq1a}) and (\ref{eq2b}), we get the desired inequality:
\begin{equation*}
\Sigma_{\pi}(A) \geq \sum_{i=1}^\Delta A_{i,n} + \sum_{\{i,j\}\in E^{\Delta-1}_{n-1}} A_{ij} =\sum_{\{i,j\}\in E^\Delta_n} A_{ij},
\end{equation*}
which concludes the proof of Claim  \ref{claim1}. \end{proof}

%Hence, in order to conclude the proof of Theorem \ref{theomain1},  it suffices to  show that we can find a set $F \subseteq E_{n}^{\Delta}$ and a permutation $\tau$ of $[n]$ for which conditions (C1)-(C3) hold. 
In a second step, we formulate (in Claim \ref{claim2} below) two new conditions (C4) and (C5) which together with (C1),(C2) imply (C3). These two conditions will be   simpler to check than (C3).

\begin{claim}
\label{claim2}
Assume that the sets  $F\subseteq E^\Delta_n$ and $R :=E^\Delta_n\setminus F$ satisfy  the conditions (C1),(C2) and  the following two conditions:
\begin{itemize}
\item[(C4)]
no pair in the set $R$ contains the element $k$,
\item[(C5)]
no  pair $\{i,n-\Delta+i\}$ with $1\le i\le \Delta$ and 
$k+1-n+\Delta\le i\le k-1$ belongs to the set $R$.
\end{itemize}
Define the  permutation $\tau=(k,k+1,\ldots,n)$.
Then, we have $R=\tau(E^{\Delta-1}_{n-1})$.
\end{claim}

\begin{proof} {\textit{(of Claim \ref{claim2}).}}
By (C1), $|F|=\Delta$, thus  $R$ has the same cardinality as the set $\tau(E^{\Delta-1}_{n-1})$ and therefore it suffices to show the inclusion
$R\subseteq \tau(E^{\Delta-1}_{n-1})$.
For this consider a pair $\{i,n-\Delta+j\}$ in $R$, where $1\le i\le j\le \Delta$.
We show that $i=\tau(r)$ and $j=\tau(s)$ for some $1\le r\le s\le \Delta-1$.
In view of (C4), we shall define $r,s$ depending whether $i$ lies before or after $k$, getting the following cases:
\begin{itemize}
\item[1)] $i\le k-1$, then $r=i$ and $i=\tau(i)$,
\item[2)] $i\ge k+1$, then $r=i-1$ and $i=\tau (i-1)$.
\end{itemize}
We do the same for index $j$, getting the following cases:
\begin{itemize}
\item[a)] $n-\Delta+j \le k-1$, then $s=j$ and $n-\Delta+j=\tau(n-\Delta+j)$,
\item[b)] $n-\Delta+j\ge k+1$, then $s=j-1$ and $n-\Delta+j=\tau(n-\Delta+j-1)$.
\end{itemize}
It remains only to check that $1\le r\le s\le \Delta-1$ holds. For this, we now discuss all possible combinations for indices $i$ and $j$ according to the above cases:
\begin{itemize}
\item[1a)] Then, $r=i$ and $s=j$. Since $1 \leq i \leq j \leq \Delta$, we only have to check that $j \leq \Delta-1$. Indeed, if $j=\Delta$, then from a) we get $n \leq k-1$, which is impossible.
\item[1b)] Then, $r=i$ and $s=j-1$. It suffices to check that $r\le s$, i.e., $i\ne j$. Indeed, assume that $i=j$. Then, the pair $\{i,n-\Delta+i\}$ belongs to $R$ with $i\le k-1$ (as we are in case 1) for index $i$) and $i\ge k+1-n+\Delta$ (as we are in case b) for index $j$), which contradicts the condition (C5).
\item[2a)] Then, $r=i-1$ and $s=j$. It suffices to check that $r \geq 1$ and $s \leq \Delta-1$. The first one holds since $i\geq 2$ as we are in case 2) for index $i$. The second one is also true, since we are in case a) for index $j$ and $k \leq n$.
\item[2b)] Then, $r=i-1$ and $s=j-1$. It suffices to check that $r \geq 1$, which holds since we are in case 2) for index $i$ and thus $i \geq 2$.
\end{itemize}
Thus we have shown that $R\subseteq \tau(E^{\Delta-1}_{n-1})$, which concludes the proof of Claim \ref{claim2}.
\end{proof}

In view of Claims \ref{claim1} and \ref{claim2}, in order to conclude the proof of the inequality (\ref{eq2}) it suffices to find a set $F\subseteq E^{\Delta}_n$ and a permutation $\tau$ of $[n]$ satisfying the conditions (C1), (C2), (C4) and (C5). 

For  the permutation $\tau$,  we choose  $\tau= (k,k+1,\ldots,n)$ as in Claim \ref{claim2}, thus $\tau(n)=k$.
It remains to construct the set $F$.  This is the last step in the proof which is a bit technical.
%we show how to construct the sets $F$ and $R:=E^\Delta_n\setminus F$ so that they satisfy the conditions (C1),(C2),(C4) and (C5).
%If we can do this, then we know that (C3) is satisfied too and thus the proof is complete (in view of Claims \ref{claim1} and \ref{claim2}) .

\medskip
The following  terminology will be useful, regarding the pairs $\{i,n-\Delta+j\}$ (for $1\le i\le j\le \Delta$) in the set $E^\Delta_n$. We refer to the pairs $\{i,n-\Delta+i\}$ (with $ 1\leq i\leq \Delta$) as the \textit{diagonal pairs}. 
Furthermore, for each $1 \leq i_0 \leq \Delta$, we refer to the pairs  $\{i_0,n-\Delta+j\}$ (with $ i_0+1 \leq j\leq\Delta$) as the \textit{horizontal pairs} on row $i_0$, meaning the pairs of $E_{n}^{\Delta}$ in the row indexed by $i_0$. 
Finally, for each $k_0=n-\Delta+j_0$ such that $1 \leq j_0 \leq \Delta$, we refer to the pairs $\{i,n-\Delta+j_0\}$ (with $1 \leq i \leq j_0-1$) as the \textit{vertical pairs} on column $k_0$, meaning the pairs of $E_{n}^{\Delta}$ in the column indexed by $k_0$.
Note that, in both horizontal and vertical pairs, the diagonal pair $\{i,n-\Delta+i\}$ is not included.
As an illustration see Figure~\ref{fig:pairs}.
\begin{figure}[!htbp]
	\centering	
	\includegraphics[scale = 0.5]{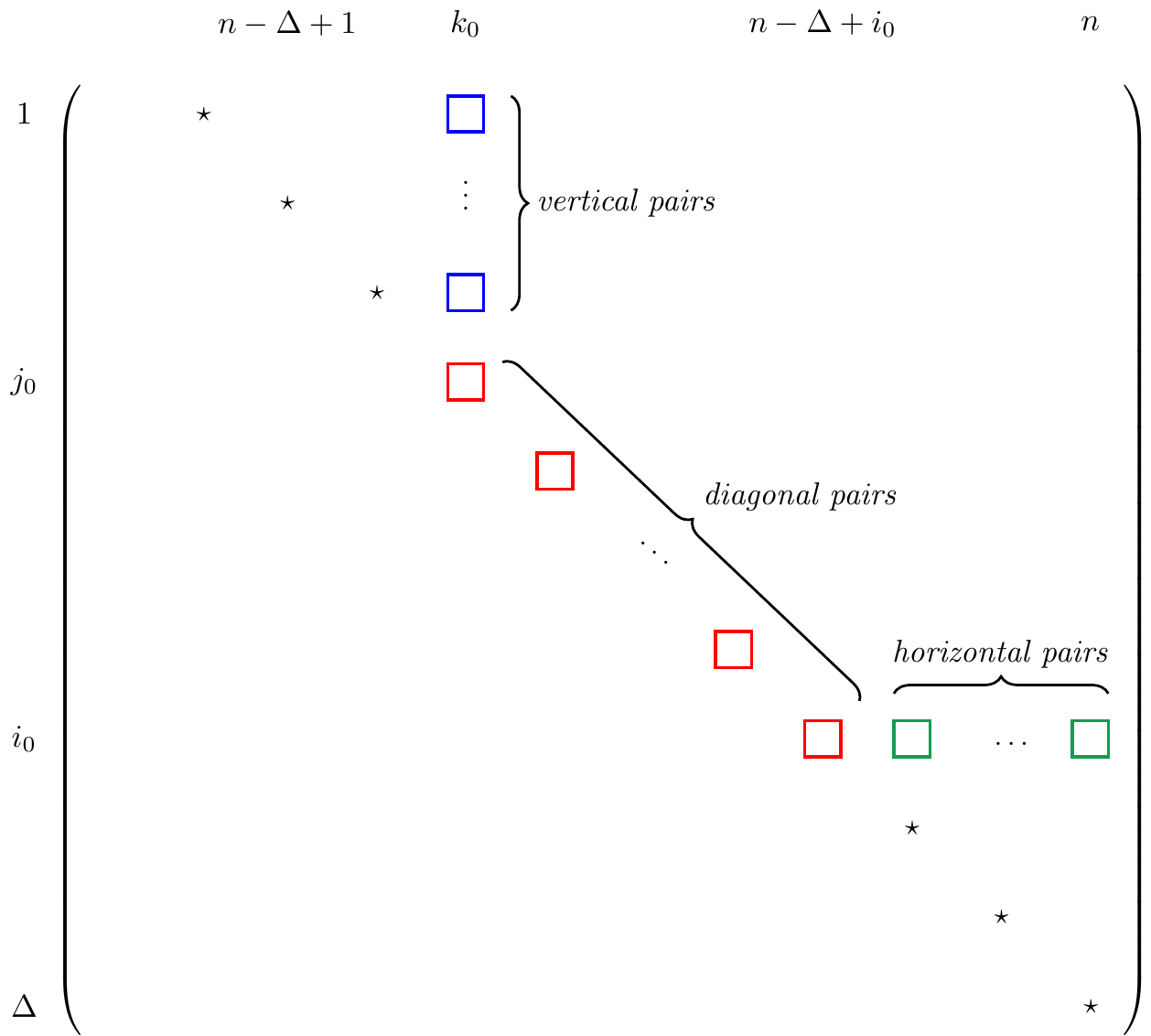}
	\caption{Vertical, diagonal and horizontal pairs in the set $E_{n}^{\Delta}$}
	\label{fig:pairs}
\end{figure}

Moreover, we denote by:
\begin{equation*}
U=U_R\cup U_C, \text{ where } U_R=\{1,\ldots,\Delta\},\quad U_C=\{n-\Delta+1,\ldots,n\},
\end{equation*}
the set consisting of the row and column indices for the nonzero entries of the matrix $B^\Delta_n$. 

In the rest of the proof we indicate how to construct the set $F$.
In view of (C4), the set $F$ must contain all the pairs in $E^\Delta_n$ that contain the index $k$.
Moreover, in view of (C5),  $F$ must contain all the diagonal pairs, except those coming before the position $(k+1-n+\Delta,k)$ on  column $k$ (if it exists) and those coming after the diagonal position $(k, n-\Delta+k)$ on row $k$ (if it exists).
Hence  we must discuss 
 depending whether the index $k$ belongs to the sets $U_R$ and/or $U_C$.
Namely we consider the following four cases: (1) $k\not\in U_R\cup U_C$, (2) $k\in U_R\setminus U_C$, (3) $k\in U_C\setminus U_R$, and (4) $k\in U_R\cap U_C$.
In each of these cases, we  define  the set $F$ which, by construction, will satisfy
% It will follow easily from its definition that 
the conditions (C1), (C4) and (C5). % hold (so we give no further details about this).  
Hence it will remain only to verify that condition (C2) holds in each of the four cases and this is what we do below.

\medskip \noindent
{\bf Case (1):} $k\not\in U_R\cup U_C$.\\
Then, $\Delta +1\le k\le n-\Delta$, which implies $\Delta \le (n-1)/2$. 
In this case we define $F$ as the set of all diagonal pairs (see Figure \ref{fig:case1} at page \pageref{fig:case1}), namely:
\begin{equation*}
F=\{\{i,n-\Delta+i\}: 1\le i\leq \Delta\}
\end{equation*}

\begin{figure}[!h]
\centering
\begin{subfigure}{.5\textwidth}
  \centering
  \includegraphics[width=.8\linewidth]{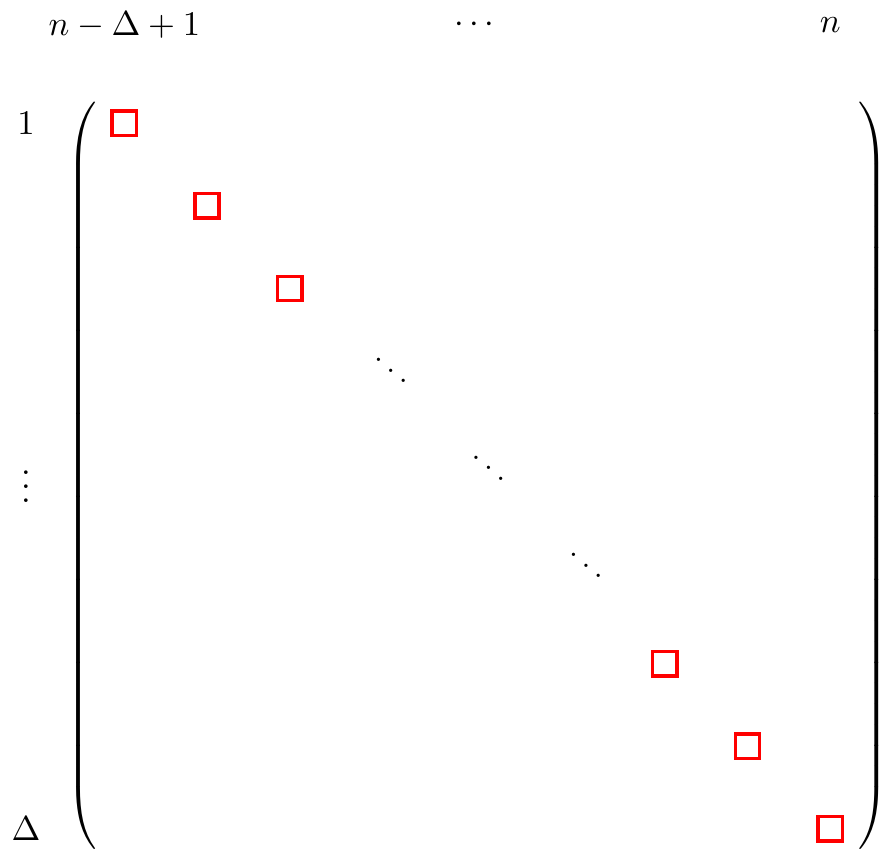}
  \caption{Definition of set $F$ for case (1)}
  \label{fig:case1}
\end{subfigure}%
\begin{subfigure}{.5\textwidth}
  \centering
  \includegraphics[width=.8\linewidth]{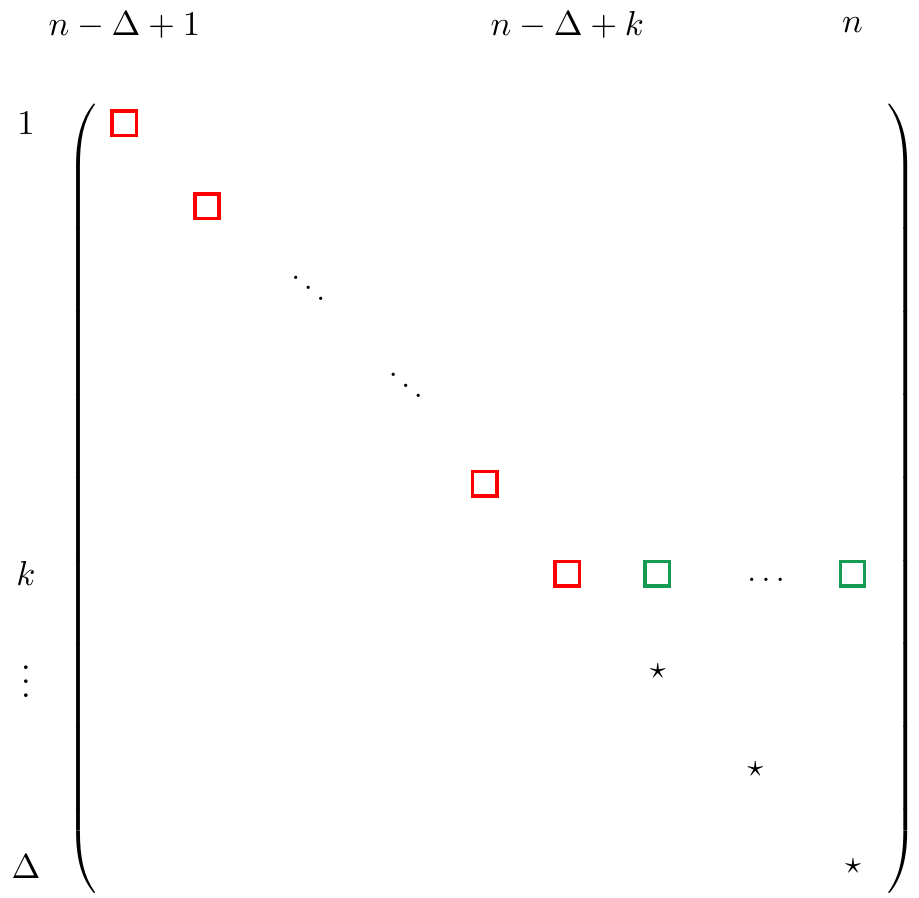}
  \caption{Definition of set $F$ for case (2)}
  \label{fig:case2}
\end{subfigure}
\bigskip
\begin{subfigure}{.5\textwidth}
  \centering
  \includegraphics[width=.8\linewidth]{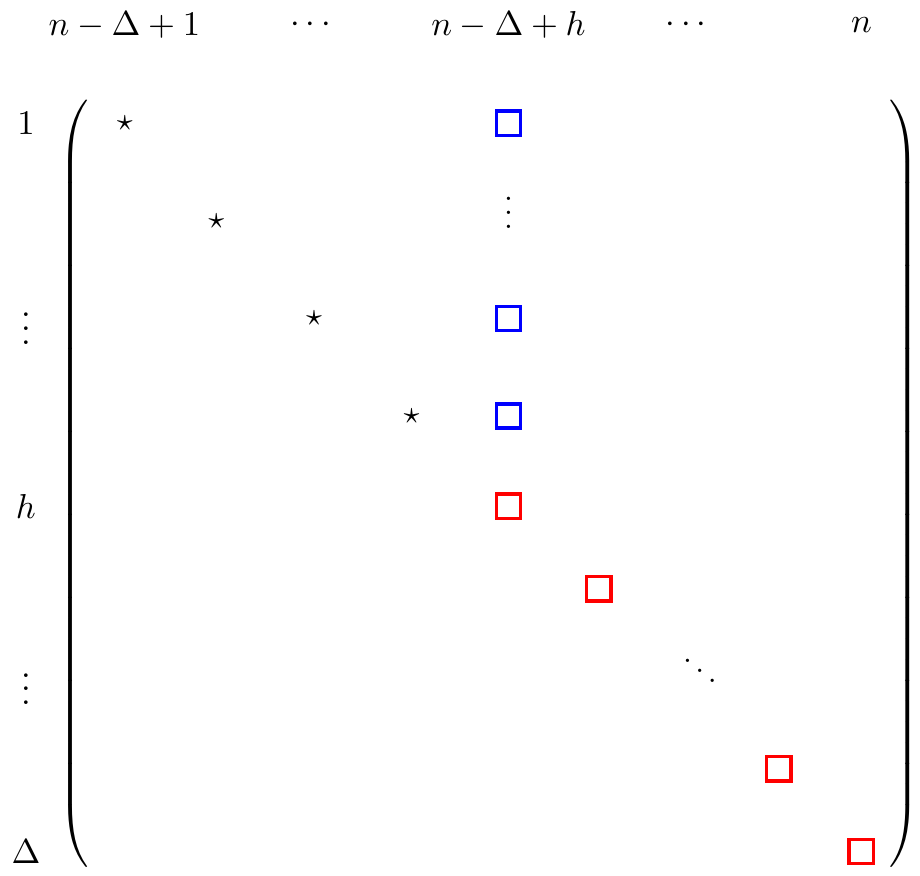}
  \caption{Definition of set $F$ for case (3)}
  \label{fig:case3}
\end{subfigure}%
\begin{subfigure}{.5\textwidth}
  \centering
  \includegraphics[width=.8\linewidth]{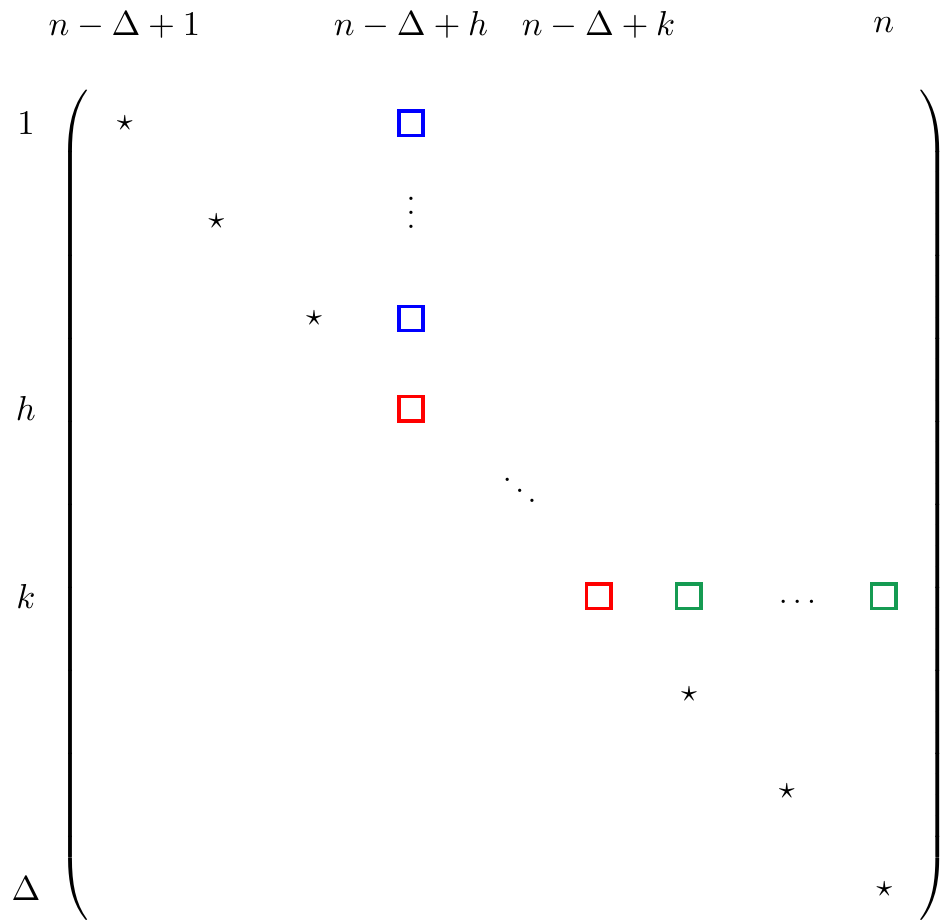}
  \caption{Definition of set $F$ for case (4)}
  \label{fig:case4}
\end{subfigure}
\caption{Different definition of set $F$ for all 4 cases}
\label{fig:Fset}
\end{figure}

To see  that (C2) holds, let $r\ne s\in [\Delta]$ and we have to show that 
$\min\{\pi(r),\pi(n-\Delta+r)\} \neq \min\{\pi(s),\pi(n-\Delta+s)\}.$

This is clear since the four indices $\pi(r),$ $ \pi(n-\Delta+r)$, $\pi(s),$ and $\pi(n-\Delta+s)$ are pairwise distinct.
Indeed, if equality ${\pi(r)=\pi(n-\Delta+s)}$ would hold, this would  imply the inequalities: ${n-\Delta+1\le r=n-\Delta +s\le \Delta}$ and thus ${\Delta \ge (n+1)/2}$, a contradiction.

\bigskip\noindent
{\bf Case (2):}  $k\in U_R\setminus U_C$.\\
Then, $1\le k\le \Delta$ and $k\le n-\Delta$. 
In this case we let $F$ consist of the diagonal pairs till position $(k,n-\Delta+k)$ and then of the horizontal pairs on the $k$-th row (see Figure \ref{fig:case2} at page \pageref{fig:case2}), namely:
\begin{equation*}
F=\{\{i,n-\Delta+i\}: 1\le i\le k\} \cup \{\{k,n-\Delta +i\}: k+1\le i\le \Delta\}.
\end{equation*}
%Then the conditions (C1), (C4) and (C5) hold. 
In order to check that (C2) holds, we consider  the following three cases:
\begin{itemize}
\item For $r\ne s \in [k]$, 
we get ${\min\{\pi(r),\pi(n-\Delta+r)\}} \ne \min\{\pi(s),\pi(n-\Delta+s)\}$, since the four indices 
$\pi(r), {\pi(n-\Delta +r)}, \pi(s), \pi(n-\Delta +s)$ are pairwise distinct (using the fact that  $k \le n-\Delta$).
\item For $r\ne s\in \{k+1,\ldots,\Delta\}$, 
using the fact that  $\pi(k)=n$, $\min \{\pi(k),\pi(n-\Delta +r)\}=\pi (n-\Delta +r) \ne {\min\{\pi(k),\pi(n-\Delta+s)\}=\pi(n-\Delta+s)}$.
\item Finally, for $r\in [k]$ and $s\in\{k+1,\ldots, \Delta\}$, we have that $\min\{\pi(r),\pi(n-\Delta+r)\}\ne \min\{\pi(k),\pi(n-\Delta+s)\}= \pi(n-\Delta+s) $. Indeed,
$\pi(r)\ne \pi(n-\Delta +s)$ (since otherwise this would imply that
$n-\Delta+k+1\le r=n-\Delta +s\le k$ and thus $n+1\le \Delta$, a contradiction) and it is clear that 
$\pi(n-\Delta+r)\ne \pi(n-\Delta +s)$. 
The case for $s\in [k]$ and $r\in\{k+1,\ldots, \Delta\}$ is symmetric.
\end{itemize}

\medskip\noindent
{\bf Case (3):} $k\in U_C\setminus U_R$.\\
This case corresponds to $k=n-\Delta+h$, where $1 \leq h \leq \Delta$. 
Then, since $k$ belongs to the column indices, we have that $n-\Delta +h=k\ge \Delta +1$, which implies 
$\Delta\le (n+h-1)/2$. 

In this case we let $F$ consists of the vertical pairs on the $k$-th column and of the diagonal pairs from position $(k,n-\Delta+k)$ (see Figure \ref{fig:case3} at page \pageref{fig:case3}), namely:
\begin{equation*}
F=\{\{i,k\}: 1\leq i \leq h-1\} \cup \{\{i,n-\Delta+i\}: h \leq i \leq \Delta\}.
\end{equation*}

To see that  (C2) holds, we consider the following three cases.
\begin{itemize}
\item For $r\ne s \in [h-1]$, $\pi(r)= \min\{\pi(r),\pi(k)\} \ne \min \{\pi(s),\pi(k)\}=\pi(s)$.
\item For $r\ne s \in \{h,\ldots,\Delta\}$, 
${\min\{\pi(r),\pi(n-\Delta+r)\}} \ne \min\{\pi(s),\pi(n-\Delta+s)\}$, since  the four indices 
${\pi(r)},{\pi(n-\Delta+r)}, {\pi(s)}, \pi(n-\Delta +s)$ are pairwise distinct.
Indeed, $\pi(r)=\pi(n-\Delta +s)$ would imply: 
$n-\Delta+h\le r=n-\Delta+s\le \Delta$ and thus $\Delta \ge (n+h)/2$, a contradiction.
%Hence (C2) holds, which concludes the proof in Case (3).
\item For $r\in [h-1]$ and $s\in \{h,\ldots,\Delta\}$, $\min\{\pi(r),\pi(k)\}=\pi(r) \ne \min\{\pi(s),\pi(n-\Delta+s)\}$, since the indices $\pi(r),\pi(s),\pi(n-\Delta+s)$ are pairwise distinct.
Indeed, equality $\pi(r)=\pi(n-\Delta+s)$ would imply that
${n-\Delta +h\le r=n-\Delta+s\le h-1}$ and thus $\Delta\ge n+1$, a contradiction.
\end{itemize}

\medskip\noindent
{\bf Case (4):} $k\in U_R\cap U_C=\{n-\Delta+1,\ldots,\Delta\}$.\\
This case corresponds to $k=n-\Delta+h$, where $1 \leq h \leq 2\Delta-n$. Moreover, we have $\Delta \ge (n+1)/2$.

Then we let $F$ consist of the vertical pairs on the $k$-th column, of the diagonal pairs from position $(h,n-\Delta+h)$ to position $(k,n-\Delta+k)$, and of the horizontal pairs on the $k$-th row (see Figure \ref{fig:case4} at page \pageref{fig:case4}), namely:
\begin{align*}
F=\{\{i,k\}: 1\leq i\leq h\} \  \cup \
\{\{i,n-\Delta +i\}: h+1\leq i\leq k\}\   \\
\cup \  \{\{k,n-\Delta+i\}: k+1\leq i\leq \Delta\}.
\end{align*}

In order  to check condition (C2), as $F$ consists of the union of three subsets we need to consider the following six cases.
\begin{itemize}
\item 
For $r\ne s\in [h]$, 
$\min\{\pi(r),\pi(k)\}=\pi(r)\ne \min\{\pi(s),\pi(k)\}=\pi(s)$.
\item
For $r\ne s\in \{h+1,\ldots,k\}$, ${\min\{\pi(r),\pi(n-\Delta+r)\}} \ne {\min\{\pi(s),\pi(n-\Delta+s)\}}$, since $\pi(r), \pi(n-\Delta+r), \pi(s)$ and $\pi(n-\Delta+s)$ are pairwise distinct. Indeed, equality $\pi(r)=\pi(n-\Delta+s)$ would imply $r=n-\Delta+s$ and thus 
$k+1=n-\Delta +h+1\le n-\Delta+s=r\le k$, yielding a contradiction.
\item
For $r\ne s \in\{k+1,\ldots,\Delta\}$, it holds
$\min\{\pi(k),\pi(n-\Delta+r)\}=\pi(n-\Delta +r)\ne 
\min\{\pi(k),\pi(n-\Delta+s)\}=\pi(n-\Delta +s).$
\item
For $r \in [h]$ and $s\in \{h+1,\ldots,k\}$, $\min\{\pi(r),\pi(k)\}=\pi(r) \ne \min\{\pi(s),\pi(n-\Delta+s)\}$, since the indices $\pi(r),\pi(s),\pi(n-\Delta+s)$ are pairwise distinct.
Indeed,
$\pi(r)=\pi(n-\Delta+s)$ would imply that $n-\Delta+h+1\le n-\Delta+s=r\le h$ and thus $\Delta\ge n+1$, a contradiction.
\item
For $r\in [h]$ and $s \in \{k+1,\ldots,\Delta\}$, then we have that $\min\{\pi(r),\pi(k)\}=\pi(r) \ne 
\min\{\pi(k),\pi(n-\Delta+s)\}={\pi(n-\Delta+s)}$,
since the indices $\pi(r),\pi(n-\Delta +s)$ are distinct.
Indeed, equality $\pi(r)=\pi(n-\Delta+s)$ would imply that $r=n-\Delta+s$ and thus 
$2(n-\Delta)+h+1=n-\Delta +k+1 \le n-\Delta+s=r \le h$, which implies $\Delta \geq n+1$, a contradiction.
\item For $r\in \{h+1,\ldots,k\}$ and $s\in \{k+1,\ldots,\Delta\}$, 
then we have that $\min\{\pi(r),\pi(n-\Delta+r)\} \ne \min\{\pi(k),\pi(n-\Delta+s)\}=\pi(n-\Delta+s)$.
Indeed, $r=n-\Delta+s$ would imply that
$n-\Delta+k+1\le n-\Delta+s=r\le k$, which implies $\Delta \geq n+1$, a contradiction.
\end{itemize}

Thus (C2) holds, which concludes the proof in case (4) and thus the proof of the theorem.
\end{proof}

\section{Applications of the main result}\label{secapplication}

We now formulate several applications  of our main result in Theorem \ref{theomain1}.
As a first direct consequence, we can show that the identity matrix is an optimal solution to the problem $\QAP(A,B)$ whenever $A$ is a Robinson similarity matrix, $B$ is a  Robinson dissimilarity matrix, and at least one of $A$ or $B$ is a Toeplitz matrix.

\begin{theorem} \label{theomain2}
Let $A,B\in \MS^n$ and assume that $A$ is a Robinson similarity matrix, $B$ is a Robinson dissimilarity matrix and moreover $A$ or $B$ is a Toeplitz matrix.
Then the identity permutation is an optimal solution to the problem $\QAP(A,B)$.
\end{theorem}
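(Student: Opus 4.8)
The plan is to reduce the general statement to the special case already settled in Theorem~\ref{theomain1}, using two ingredients: the conic decomposition of a Toeplitz Robinson dissimilarity provided by Lemma~\ref{lemToeplitz}, and the symmetry between the two matrices. First I would rewrite the objective of $\QAP(A,B)$ as $\sum_{i,j=1}^n A_{ij} B_{\pi(i)\pi(j)} = \langle A, B_\pi\rangle$, so that optimality of the identity permutation is precisely the claim that $\langle A, B_\pi\rangle \geq \langle A, B\rangle$ holds for every $\pi \in \MP_n$. Since $B$ is a Robinson dissimilarity I may normalize its diagonal to zero, in which case (as noted after (\ref{eqpi})) the diagonal entries of $A$ play no role in the objective; note also that each $B^\Delta_n$ has zero diagonal for $\Delta \le n-1$. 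The argument then splits according to which matrix is Toeplitz.

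\textbf{Case 1: $B$ is Toeplitz.} Here $B$ is a Toeplitz Robinson dissimilarity, so Lemma~\ref{lemToeplitz} lets me write $B = \sum_{\Delta=1}^{n-1} \mu_\Delta B^\Delta_n$ with all $\mu_\Delta \geq 0$. By bilinearity of the trace inner product and the identity $\langle A, (B^\Delta_n)_\pi\rangle = \langle A_{\pi^{-1}}, B^\Delta_n\rangle$ from (\ref{eqpi}), I obtain $\langle A, B_\pi\rangle = \sum_{\Delta} \mu_\Delta \langle A_{\pi^{-1}}, B^\Delta_n\rangle$. Applying Theorem~\ref{theomain1} to the Robinson similarity $A$ with the permutation $\pi^{-1}$ bounds each summand below by $\langle A, B^\Delta_n\rangle$, and summing against the nonnegative weights $\mu_\Delta$ yields $\langle A, B_\pi\rangle \geq \sum_\Delta \mu_\Delta \langle A, B^\Delta_n\rangle = \langle A, B\rangle$, as desired.

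\textbf{Case 2: $A$ is Toeplitz.} I would reduce this to Case 1 by negation and transposition of roles. Set $\tilde A = -B$ and $\tilde B = -A$. Then $\tilde A$ is a Robinson similarity (because $B$ is a Robinson dissimilarity) and $\tilde B$ is a Toeplitz Robinson dissimilarity (because $A$ is a Toeplitz Robinson similarity, and negation preserves the Toeplitz property). Case 1 applied to the pair $(\tilde A, \tilde B)$ gives $\langle \tilde A, \tilde B_\sigma\rangle \geq \langle \tilde A, \tilde B\rangle$ for all $\sigma$, which after cancelling the signs reads $\langle B, A_\sigma\rangle \geq \langle B, A\rangle$. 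Taking $\sigma = \pi^{-1}$ and using $\langle A, B_\pi\rangle = \langle A_{\pi^{-1}}, B\rangle = \langle B, A_{\pi^{-1}}\rangle$, again from (\ref{eqpi}), delivers exactly $\langle A, B_\pi\rangle \geq \langle A, B\rangle$.

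Since the combinatorial heart of the matter is already carried out in Theorem~\ref{theomain1}, I do not anticipate a genuine obstacle here: the statement is essentially a corollary. The one point demanding care is the bookkeeping of $\pi$ versus $\pi^{-1}$ as it passes through the two identities in (\ref{eqpi}), together with a check that the diagonal normalization is harmless. The only mildly clever step is the negation--transposition reduction of the ``$A$ Toeplitz'' case to the ``$B$ Toeplitz'' case, which relies on the fact that negating a Robinson similarity produces a Robinson dissimilarity while leaving the Toeplitz structure intact.
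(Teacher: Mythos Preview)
Your proposal is correct and follows essentially the same strategy as the paper: decompose the Toeplitz Robinson dissimilarity via Lemma~\ref{lemToeplitz} into a conic combination of the matrices $B^\Delta_n$, apply Theorem~\ref{theomain1} termwise, and handle the ``$A$ Toeplitz'' case by swapping the roles of the two matrices. The only cosmetic difference is in Case~2: the paper shifts by multiples of $J$ (setting $A'=\beta J-B$ and $B'=\alpha J-A$) rather than using your plain negation $\tilde A=-B$, $\tilde B=-A$; since $\langle M_\pi,J\rangle$ is independent of $\pi$, the two reductions are equivalent and yours is marginally cleaner.
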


\begin{proof}
Assume first that $B$ is a Toeplitz matrix. Then, by Lemma \ref{lemToeplitz}, $B$ is a conic combination of the matrices $B^\Delta_n$, i.e.,
$B=\sum_{\Delta=1}^{n-1} \lambda_\Delta B^\Delta_n$ for some scalars $\lambda_\Delta\ge 0$.
Applying the inequality (\ref{eq1}) in Theorem \ref{theomain1}, we obtain that 
$$\langle A_\pi,B\rangle =\sum_{\Delta=1}^{n-1}\lambda_\Delta \langle A_\pi,B^\Delta_n\rangle \ge 
\sum_{\Delta=1}^{n-1}\lambda_\Delta \langle  A,B^\Delta_n\rangle =\langle A,B\rangle,$$
which shows that the identity permutation is optimal for $\QAP(A,B)$.

Assume now that $A$ is a Toeplitz Robinson similarity (and $B$ is a Robinson dissimilarity). Then we simply exchange the roles of $A$ and $B$ after a simple modification.  Namely, let $\alpha$ and $\beta$ denote the maximum value of the entries of $A$ and $B$, respectively.
%According to Lemma \ref{lemR0}, 
Let define the two matrices $B'= \alpha J -A$ and $A'=\beta J-B$. 
Then $A'$ is a Robinson similarity matrix and $B'$ is a Toeplitz  Robinson dissimilarity matrix.
For any permutation $\pi$, by the previous result applied to $\QAP(A',B')$, $\langle (A')_{\pi},(B') \rangle \ge \langle A',B'\rangle$.
If we compute the inner product of both sides, we obtain 
$\langle (A')_{\pi},B' \rangle = \alpha\beta \langle J_{\pi},J\rangle -\alpha \langle J_{\pi},A \rangle - \beta \langle B_{\pi},J\rangle + \langle B_{\pi}, A\rangle$ and 
$\langle A',B' \rangle = \alpha\beta \langle J,J\rangle -\alpha \langle J,A \rangle - \beta \langle B,J\rangle + \langle B, A\rangle$, from which we can easily conclude that $ \langle A , B_{\pi} \rangle \geq \langle  A, B\rangle$.
Hence, this shows that the identity permutation is optimal for $\QAP(A,B)$ and it concludes the proof.
\end{proof}

As a direct application,  Theorem \ref{theomain2} extends to the case when the matrices $A$ and $B$ are Robinsonian.

\begin{corollary}\label{theomain3}
Let $A,B\in \MS^n$. Assume that $A$ is a Robinsonian similarity matrix, $B$ is a Robinsonian dissimilarity matrix, and let $\pi$, $\tau$  be permutations that  reorder $A$ and $B$ as Robinson similarity and dissimilarity matrices, respectively.
Assume furthermore that one of the matrices $A_{\pi}$ or $B_{\tau}$ is a Toeplitz matrix.
Then the permutation $\tau^{-1}\pi$ is optimal for the problem $\QAP(A,B)$.
\end{corollary}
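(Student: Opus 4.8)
The plan is to reduce $\QAP(A,B)$ for the Robinsonian pair $(A,B)$ to the already-solved Robinson case treated in Theorem~\ref{theomain2}, by performing a change of variables in the permutation being optimized. Since $\pi$ reorders $A$ into the Robinson similarity $A_\pi$ and $\tau$ reorders $B$ into the Robinson dissimilarity $B_\tau$, and one of these two matrices is Toeplitz by hypothesis, Theorem~\ref{theomain2} applies to the pair $(A_\pi,B_\tau)$ and tells us that the identity is optimal for $\QAP(A_\pi,B_\tau)$; equivalently, $\langle A_\pi,(B_\tau)_\rho\rangle\ge \langle A_\pi,B_\tau\rangle$ for every $\rho\in\MP_n$.

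First I would rewrite the QAP objective $\langle A,B_\sigma\rangle$ (ranging over $\sigma\in\MP_n$) purely in terms of the Robinson matrices $A_\pi$ and $B_\tau$. Using the two identities in (\ref{eqpi}), namely $(X_\alpha)_\beta=X_{\alpha\beta}$ and $\langle X_\alpha,Y\rangle=\langle X,Y_{\alpha^{-1}}\rangle$, together with the inverse relations $A=(A_\pi)_{\pi^{-1}}$ and $B=(B_\tau)_{\tau^{-1}}$, I would push all the permutations onto the second argument and obtain an identity of the form $\langle A,B_\sigma\rangle=\langle A_\pi,(B_\tau)_{\rho(\sigma)}\rangle$, where $\rho(\sigma)$ is a single permutation depending on $\sigma$, $\pi$ and $\tau$ through composition and inversion. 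The key structural point is that $\sigma\mapsto\rho(\sigma)$ is a bijection of $\MP_n$, so minimizing the left-hand side over $\sigma$ is the same as minimizing $\langle A_\pi,(B_\tau)_\rho\rangle$ over $\rho$.

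Combining this rewriting with the inequality furnished by Theorem~\ref{theomain2} then yields $\langle A,B_\sigma\rangle\ge \langle A_\pi,B_\tau\rangle$ for every $\sigma\in\MP_n$, with equality attained when $\rho(\sigma)$ equals the identity permutation. Solving $\rho(\sigma)=\mathrm{id}$ for $\sigma$ pins down an optimal permutation, and I would check that this solution is exactly $\tau^{-1}\pi$, which attains the lower bound $\langle A_\pi,B_\tau\rangle$ and is therefore optimal for $\QAP(A,B)$, as claimed.

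The only real obstacle is bookkeeping: the result becomes essentially immediate once the objective has been transported into the Robinson coordinates, but one must track the order of composition and the placement of the inverses with care, since it is precisely this algebra that determines whether the optimizer comes out as $\tau^{-1}\pi$ rather than some other word in $\pi$ and $\tau$. Everything else is a direct application of the identities (\ref{eqpi}) and of Theorem~\ref{theomain2}.
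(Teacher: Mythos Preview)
Your approach is correct and is exactly what the paper does: its entire proof reads ``Directly from Theorem~\ref{theomain2}, using relation~(\ref{eqpi}),'' and you have simply spelled out that one-line argument in more detail. Your caution about the bookkeeping is well placed: with the paper's conventions $(A_\pi)_\tau=A_{\pi\tau}$ and $\langle A,B_\tau\rangle=\langle A_{\tau^{-1}},B\rangle$, one finds $\langle A,B_\sigma\rangle=\langle A_\pi,(B_\tau)_{\tau^{-1}\sigma\pi}\rangle$, so the minimizer in the formulation $\min_\sigma\langle A,B_\sigma\rangle$ actually comes out as $\sigma=\tau\pi^{-1}$ rather than $\tau^{-1}\pi$; this is a wrinkle in the statement, not in your (or the paper's) method.
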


\begin{proof}
Directly from Theorem \ref{theomain2}, using relation (\ref{eqpi}).
%By Theorem \ref{theomain2}, we know that $\langle A_\pi, B_{\tau} \rangle$ is the optimal value of $\QAP(A_\pi,B_\tau)$, i.e., 
%$\langle A_\pi, B_{\tau} \rangle \le \langle (A_\pi)_{\sigma}, B_\tau\rangle$ holds for each $\sigma \in \MP_n$. 
%Using  (\ref{eqpi}), this implies 
%$ \langle A_{{\tau^{-1}}\pi},B\rangle \le \langle A_{\tau^{-1}\pi\sigma},B\rangle$ for all $\sigma\in\MP_n$ and thus 
%the permutation $\tau^{-1}\pi$ is optimal for $\QAP(A,B)$.
\end{proof}

Finally we observe  that the assumption that either $A$ or $B$ has a Toeplitz structure cannot be omitted in Theorem \ref{theomain1}. Indeed, consider the following matrices:
\begin{equation*}
A =
\left( \begin{matrix}
1 & 1 & 1 & 0 & 0\\
1 & 1 & 1 & 1 & 0\\
1 & 1 & 1 & 1 & 0\\
0 & 1 & 1 & 1 & 0\\
0 & 0 & 0 & 0 & 1
\end{matrix}\right)
,\quad
B =
\left( \begin{matrix}
0 & 1 & 1 & 1 & 1\\
1 & 0 & 1 & 1 & 1\\
1 & 1 & 0 & 0 & 0\\
1 & 1 & 0 & 0 & 0\\
1 & 1 & 0 & 0 & 0
\end{matrix}\right) 
\end{equation*}
so that $A$ (resp., $B$) is a Robinson similarity (resp., dissimilarity). In this case, the identity permutation gives a solution of value 
 $\langle A,B \rangle= 8$. Consider now the permutation $\pi = (4,5,1,2,3)$ which reorders $A$ as follows:
\begin{equation*}
A_\pi =
\left( \begin{matrix}
1 & 0 & 0 & 1 & 1\\
0 & 1 & 0 & 0 & 0\\
0 & 0 & 1 & 1 & 1\\
1 & 0 & 1 & 1 & 1\\
1 & 0 & 1 & 1 & 1
\end{matrix}\right).
\end{equation*}
This gives a solution of value  $\langle A_\pi,B \rangle = 4$. Hence, in this case the identity permutation is not optimal, and thus the Toeplitz assumption cannot be removed in Theorem \ref{theomain1}.

To the best of our knowledge the complexity of $\QAP(A,B)$, when $A$ is a Robinson similarity and $B$ is a Robinson dissimilarity, is not known.

\section*{Acknowledgements}
This work was supported by the Marie Curie Initial Training Network ``Mixed Integer Nonlinear Optimization" (MINO).
We thank two anonymous referees for their useful comments. In particular we are grateful to a referee for pointing out to us  related results in  the references \cite{Christopher96} and \cite{Deineko98}, and 
to Gerhard Woeginger for pointing out to us the paper %of Pr{\'e}a and Fortin~
\cite{Prea14}.

\end{document}